\let\csname[\endcsname\relax
\let\csname]\endcsname\relax
\DeclareRobustCommand\csname[\expandafter\endcsname\expandafter{%
    \csname begin\endcsname{equation}%
  }%
\DeclareRobustCommand\csname]\expandafter\endcsname\expandafter{%
    \csname end\endcsname{equation}%
  }%
\declaretheorem[numbered=no, name=Theorem]{Theorem-Intro}
\declaretheorem[numbered=no, name=Question]{Question-Intro}
\declaretheorem{Lemma}
\declaretheorem[sibling=Lemma]{Theorem}
\declaretheorem[sibling=Lemma]{Proposition}
\newlist{thmlist}{enumerate}{1}
\setlist[thmlist]{
        nolistsep,
        ref={\mdseries\textup{(\emph{\roman*})}},
        label={\mdseries\textup{(\emph{\roman*})}},
        }
\newcommand\thmitem[1]{\textup{(\emph{\romannumeral #1})}}
\newlength{\@thlabel@width}
\newcommand{\fixhspace}{%
        \settowidth{\@thlabel@width}{\hskip0.5em\itshape1.}%
        \sbox{\@labels}{%
                \unhbox\@labels%
                \hspace{\dimexpr-\leftmargin+\labelsep+\@thlabel@width-\itemindent}
                }
        }
\renewcommand\O{\mathcal{O}}
\newcommand\II{\mathbb{I}}
\newcommand\NN{\mathbb{N}}
\newcommand\CC{\mathbb{C}}
\newcommand\kk{\Bbbk}
\newcommand\A{\mathcal{A}}
\DeclareMathOperator\Der{Der}
\DeclareMathOperator\GL{GL}
\DeclareMathOperator\Diff{Diff}
\DeclareMathOperator\perm{perm}
\DeclareMathOperator\Sym{Sym}
\DeclareMathOperator\gr{gr}
\newcommand\idx[1]{\mathbf{#1}}
\newcommand\B{\mathscr{B}}
\newcommand\subst[3]{#1\langle #2/#3\rangle}
\DeclarePairedDelimiter\abs{\lvert}{\rvert}
\colorlet{newterm-color}{blue!50!black}
\newcommand\newterm[1]{\textbf{\itshape\color{newterm-color}#1}}
\title[The algebra of differential operators]{The algebra of differential
operators tangent to a hyperplane arrangement}
\author{Mariano Suárez-Álvarez}
\thanks{IMAS (CONICET) -- Universidad de Buenos Aires}
\address{Departamento de Matem\'atica, Facultad de Ciencias Exactas y
Naturales, Universidad de Buenos Aires. Ciudad Universitaria,
Pabell\'on I (1428) Ciudad de Buenos Aires, Argentina.}
\email{mariano@dm.uba.ar}
\date{June 14, 2018}
\begin{document}

\maketitle

Let us fix a ground field~$\kk$, once and for all, and a vector space $V$
of positive dimension~$\ell$, and let $S$ be the graded algebra of
polynomial functions on~$V$. Let $\A$ be a central hyperplane arrangement
in~$V$, that is, a set of subspaces $H_1$,~\dots,~$H_r$ of~$V$ of
codimension~$1$, and for each $i\in\{1,\dots,\ell\}$ let $\alpha_i\in V^*$
be a linear form on~$V$ such that $H_i=\ker\alpha_i$. The product
$Q=\alpha_1\cdots\alpha_r\in S$, which we call a defining polynomial of the
arrangement,  has zero locus equal to the union~$N(\A)=\bigcup_{i=1}^rH_i$
of the hyperplanes of~$\A$ and depends, up to a non-zero scalar, only on
the arrangement.

We write $\Der(S)$ the Lie algebra of all derivations of~$S$ endowed with
its usual structure of a graded left $S$-module and for each ideal
$I\subseteq S$ we consider the set $\Der(I) =
\{\delta\in\Der(S):\delta(I)\subseteq I\}$, which is both a Lie subalgebra
and a graded $S$-submodule of~$\Der(S)$. The \newterm{Lie
algebra~$\Der(\A)$ of derivations tangent to~$\A$} is $\Der(QS)$ and the
arrangement is \newterm{free} if $\Der(\A)$ is a free graded $S$-module.
This notion was introduced in the context of the study of logarithmic
vector fields related to a hypersurface of a smooth complex analytic
manifold by Kyoji Saito in~\cite{Saito}. We do not know what exactly makes
an arrangement free and it is not a generic property, but we have plenty of
evidence that freeness is both important and useful, and it shows up in
many contexts. For example, Hiroaki Terao proved in~\cite{Terao} that if
$\kk$ is the field of complex numbers and $G$ is a finite subgroup
of~$\GL(V)$ generated by pseudo-reflections, then the arrangement $\A_G$ of
the reflecting hyperplanes of all pseudo-reflections in~$G$ is free.

\medskip

Recall that if $R$ is an algebra and $I$ is a right ideal of~$R$, the
\newterm{idealizer} of~$I$ in~$R$ is the subalgebra~$\II_R(I)=\{ r\in R:rI\subseteq
I\}$ of~$R$, easily seen to be the largest subalgebra of~$R$ that
contains~$I$ as a bilateral ideal. We use this notion in our situation as
follows. Let $\Diff(S)$ be the algebra of all regular differential
operators on~$S$ and let us view $S$ as a subalgebra of~$\Diff(S)$ by
identifying each $f\in S$ with the differential operator of order zero
given by multiplication by~$f$. For each $f\in S$ we denote
$\Diff(f^\infty\Diff(S))$ the intersection of idealizers
$\bigcap_{t\geq1}\II_{\Diff(S)}(f^t\Diff(S))$ and we define the
\newterm{algebra~$\Diff(\A)$ of differential operators tangent to the
arrangement~$\A$} to be the subalgebra~$\Diff(Q^\infty\Diff(S))$
of~$\Diff(S)$. 

The algebra $\Diff(\A)$ contains both~$S$ and~$\Der(\A)$. Now, it is
well-known that the algebra~$\Diff(S)$ is generated by its subset
$S\cup\Der(S)$, provided that the characteristic of the ground field is
zero. The purpose of this paper is to show that an analogous statement
holds for free arrangements:

\begin{Theorem-Intro}
If the characteristic of~$\kk$ is zero and the arrangement~$\A$ is free,
then the algebra~$\Diff(\A)$ is generated by $S\cup\Der(\A)$.
\end{Theorem-Intro}

We prove this by extending to differential operators of arbitrary order the
beautiful criterion given by Saito in~\cite{Saito} for deciding the
freeness of a divisor in terms of a certain Jacobian determinant ---given
for the case of hyperplane arrangements in~\cite{OT}. This theorem has been
proved by Francisco J. Calder\'on-Moreno in~\cite{CM} and later by Mathias
Schulze in~\cite{Schulze}, both working in the general context of an
arbitrary divisor in a complex manifold.

A natural question  ---analogous to the conjecture of Yoshikazu Nakai
\cite{Nakai} which asserts that an affine algebraic variety $X$ over a field of
characteristic zero  is smooth if its algebra of regular differential
operators is generated by functions~$\O_X$ and derivations~$\Der(\O_X)$---
is whether the necessary condition for freeness given by the theorem above
is also sufficient. This is not so: Schulze shows in~\cite{Schulze} that
the generic $3$-arrangement in~$\CC^3$ with equation $xyz(x+y+z)=0$ is a
counterexample: $\Diff(\A)$ is in this case generated by~$S$
and~$\Der(\A)$ and yet the $\A$ is not free.

\medskip

In the first section of the paper we prove a general lemma from multilinear
algebra, mostly for completeness. In the second one we present a `higher'
version of Jacobian determinants, and in the third, final one we use them
to prove the theorem.
We use the book \cite{OT} as our general reference for anything
related to hyperplane arrangements, and \cite{MR}*{Chapter 15}
and~\cite{Grothendieck} for algebras of differential operators.

\section{A lemma from multilinear algebra}

Let $R$ be a commutative ring, let $\ell$ be a positive integer and let
$M=(u_{i,j})$ be an $\ell\times\ell$ matrix with entries in~$R$. Let
$p\in\NN$ and let $W_p$ be the set of sequences $\idx{i}=(i_1,\dots,i_p)$
of integers of length~$p$ such that $1\leq i_1\leq\cdots\leq i_p\leq\ell$.
There are $\binom{p+\ell-1}{\ell-1}$ elements in~$W_p$.
If~$\idx{i}=(i_1,\dots,i_p)\in W_p$, we write $\idx{i}'$ the $\ell$-tuple
$(i'_1,\dots,i'_\ell)\in\NN_0^\ell$ with each entry~$i'_j$ denoting the
number of times the integer~$j$ appears in~$\idx{i}$ and put
$\idx{i'}!=i'_1!\cdots i'_\ell!$.

If $\idx{i}=(i_1,\dots,i_n)$ and $\idx{j}=(j_1,\dots,j_p)$ are two elements
of~$W_p$, we consider the permanent
  \[
  u_{\idx{i},\idx{j}} = \perm 
    \begin{pmatrix}
    u_{i_1,j_1} & \cdots & u_{i_1,j_p} \\
    \vdots      &        & \vdots      \\
    u_{i_p,j_1} & \cdots & u_{i_p,j_p}
    \end{pmatrix}
  \]
and let $M^{(p)}$ be the matrix $(u_{\idx{i},\idx{j}})_{\idx{i},\idx{j}\in
W_p}$, with rows and columns ordered according to a fixed but irrelevant
total ordering of the set~$W_p$.

\begin{Lemma}\label{lemma:multi}
We have $\det M^{(p)} = 
\gamma_{\ell,p} \cdot (\det M)^{\binom{p+\ell-1}{\ell}}$, with
  \( \displaystyle
  \gamma_{\ell,p} = \prod_{\idx{i}\in W_p}\idx{i}'!
  \).
\end{Lemma}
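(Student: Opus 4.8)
The plan is to recognise $M^{(p)}$ as a mild rescaling of the matrix representing the $p$-th symmetric power of the endomorphism of a free module determined by $M$, to compute the determinant of that symmetric power by the classical eigenvalue argument, and to make everything valid over an arbitrary commutative ring by a universal-coefficients reduction. Concretely, I would fix a free $R$-module $U$ with basis $e_1,\dots,e_\ell$ and let $M$ act by $e_j\mapsto\sum_i u_{i,j}e_i$, so that $\Sym^p U$ acquires the monomial basis $\{e_{\idx{i}}=e_{i_1}\cdots e_{i_p}\}_{\idx{i}\in W_p}$ indexed by $W_p$.

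First I would expand the product $(Me_{j_1})\cdots(Me_{j_p})$ and collect the coefficient of $e_{\idx{i}}$; this coefficient is the sum of $\prod_b u_{k_b,j_b}$ over all sequences $(k_1,\dots,k_p)$ rearranging the multiset $\idx{i}$. Since the permanent $u_{\idx{i},\idx{j}}=\sum_{\sigma\in S_p}\prod_b u_{i_{\sigma(b)},j_b}$ runs over all $p!$ permutations, and each distinct rearrangement is produced by exactly $\idx{i}'!$ of them (the stabiliser of $\idx{i}$ under the permutation action), that coefficient equals $u_{\idx{i},\idx{j}}/\idx{i}'!$. Hence, writing $P$ for the matrix of $\Sym^p M$ in the monomial basis, one has $M^{(p)}=D\cdot P$ where $D$ is the diagonal matrix with entries $\idx{i}'!$. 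Crucially the entries of $P$ are honest polynomials in the $u_{i,j}$ with integer coefficients (no division is actually performed), so this identity holds over any $R$ and yields $\det M^{(p)}=\gamma_{\ell,p}\det P$.

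It then remains to prove $\det P=(\det M)^{\binom{p+\ell-1}{\ell}}$. The cleanest route is to first treat the case where $R$ is an algebraically closed field: the map $M\mapsto\det P$ is a polynomial function invariant under conjugation, because functoriality of $\Sym^p$ gives $P(MN)=P(M)P(N)$ and $P(I)$ is the identity. For a diagonal matrix $\mathrm{diag}(\lambda_1,\dots,\lambda_\ell)$ the symmetric power is again diagonal with entries $\prod_i\lambda_i^{a_i}$ over all $\idx{a}$ with $\sum_i a_i=p$; multiplying these, each $\lambda_i$ occurs, by symmetry, to the power $\tfrac{p}{\ell}\binom{p+\ell-1}{\ell-1}=\binom{p+\ell-1}{\ell}$, so the identity holds on the Zariski-dense set of diagonalisable matrices and therefore everywhere. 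To descend to a general commutative ring I would regard the $u_{i,j}$ as indeterminates: both sides then lie in $\mathbb{Z}[u_{i,j}]$ and agree after base change to $\CC$, so they coincide in $\mathbb{Z}[u_{i,j}]$, and the general case follows by specialisation.

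The only real obstacle is bookkeeping rather than conceptual: ensuring the factorial $\idx{i}'!$ lands on the correct side in the first step (it measures the overcounting of the permanent relative to the monomial coefficient), and checking that the exponent $\tfrac{p}{\ell}\binom{p+\ell-1}{\ell-1}$ does simplify to $\binom{p+\ell-1}{\ell}$. Everything else is a routine polynomial-identity argument.
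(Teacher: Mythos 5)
Your proposal is correct and follows essentially the same route as the paper: both identify $M^{(p)}$, up to the diagonal factor of factorials $\idx{i}'!$, with the matrix of $\Sym^p M$ in the monomial basis, and then compute $\det \Sym^p M$ via multiplicativity coming from functoriality together with a check on diagonal matrices and a density/polynomial-identity argument (you treat a general diagonal matrix where the paper reduces to $\mathrm{diag}(\lambda,1,\dots,1)$, and you perform the descent through $\mathbb{Z}[u_{i,j}]$ at the end rather than reducing to $\CC$ at the outset --- immaterial differences). Incidentally, your bookkeeping is the accurate one: the factorial attaches to the row multiset, giving $u_{\idx{i},\idx{j}}=\idx{i}'!\,\tilde u_{\idx{i},\idx{j}}$, whereas the paper's displayed identity puts $\idx{j}'!$ on the column index, a harmless slip since the product over all of $W_p$ equals $\gamma_{\ell,p}$ either way.
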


\begin{proof}
The identity we are to prove is the vanishing of a polynomial with integer
coefficients when evaluated at the entries of the matrix~$M$. A standard
argument shows that it is enough to prove it in the case where the ring~$R$
is the field~$\CC$ of complex numbers.

Let $\B=\{e_1,\dots,e_\ell\}$ be the standard basis of~$\CC^\ell$ and let
$\Sym(\CC^\ell)$ be the symmetric algebra on~$\CC^\ell$. If for each
$\idx{i}=(i_1,\dots,i_p)\in W_p$ we write $e_{\idx{i}}=e_{i_1}\cdots
e_{i_p}$, with the product being that of~$\Sym(\CC^\ell)$, then the set
$\B^{(p)}=\{e_{\idx{i}}:\idx{i}\in W_p\}$ is a basis of $\Sym^p(\CC^\ell)$,
the $p$th homogeneous component of~$\Sym(\CC^\ell)$. Let
$u:\CC^\ell\to\CC^\ell$ be the map such that $u(e_i)=\sum_{j=1}^\ell
u_{j,i}e_i$ for each $i\in\{1,\dots,\ell\}$ and let
  \[
  u^{(p)}:\Sym^p(\CC^\ell)\to\Sym^p(\CC^\ell)
  \]
be the map induced by~$u$
on~$\Sym^p(\CC^\ell)$. If~$\tilde M^{(p)}=(\tilde
u_{\idx{i},\idx{j}})_{\idx{i},\idx{j}\in W_p}$ is the matrix of~$u^{(p)}$
with respect to the basis~$\B^{(p)}$, so that
$u^{(p)}(e_{\idx{i}})=\sum_{j\in W_p}\tilde
u_{\idx{j},\idx{i}}e_{\idx{j}}$, then a straightforward computation shows
that for all $\idx{i}$,~$\idx{j}\in W_p$ we have
  \[ \label{eq:tuu}
  \idx{j}'!\,\tilde u^{(p)}_{\idx{i},\idx{j}}
    = u_{\idx{i},\idx{j}}.
  \]
It follows immediately from this that $\gamma_{\ell,p}\det\tilde
M^{(p)}=\det M^{(p)}$, and the claim of the lemma holds for~$M$ if and only
if we have
  \[
  \det\tilde M^{(p)} = (\det M)^{\binom{p+\ell-1}{\ell}}.
  \]
Both sides of this equality are obtained as the result of by evaluating a
continuous monoid homomorphism $M_\ell(\CC)\to\CC$ defined on the matrix
algebra~$M_\ell(\CC)$ at the matrix~$M$: as a consequence of this and of
the fact that $\Sym^p$ is a functor, we see that it it is enough to show
that the lemma holds if the matrix~$M$ is a diagonal complex matrix of the
form
  \[
  \begin{pmatrix}
    \lambda \\
    & 1 \\
    && \ddots \\
    &&& 1
  \end{pmatrix}.
  \]
We assume that this is the case. Let $\idx{i}$,~$\idx{j}\in W_p$. The matrix
  \[
    \begin{pmatrix}
    u_{i_1,j_1} & \cdots & u_{i_1,j_p} \\
    \vdots      &        & \vdots      \\
    u_{i_p,j_1} & \cdots & u_{i_p,j_p}
    \end{pmatrix}
  \]
is then a block diagonal matrix, with diagonal blocks of sizes $i'_1\times
j'_1$, \dots, $i'_\ell\times j'_\ell$, respectively. All entries in the
first of these blocks are equal to~$\lambda$ and all entries in all the
other $\ell-1$ of them are equal to~$1$. It is easy to see that the
permanent of this matrix, which we have named~$u_{\idx{i},\idx{j}}$, is
zero if~$\idx{i}\neq \idx{j}$, and when $\idx{i}=\idx{j}$ it is equal to
$\idx{i}'!\lambda^{i'_1}$. In view of~\eqref{eq:tuu}, we conclude that
  \[
  \tilde u_{\idx{i},\idx{j}}
    = \begin{cases*}
      \lambda^{i'_1}, & if $\idx{i}=\idx{j}$; \\
      0, & if not.
      \end{cases*}
  \]
The matrix~$\tilde M^{(p)}$ is thus a diagonal matrix and its determinant
is therefore
  \[
  \det\tilde M^{(p)} 
    = \prod_{\idx{i}\in W_p}\lambda^{i'_1}
    = \lambda^{\sum\limits_{\idx{i}\in W_p}i'_1}.
  \]
The exponent of~$\lambda$ in the last member of this chain of equalities is
equal to~$\binom{p+\ell-1}{\ell}$. As $\det M=\lambda$, the lemma is thus
proved.
\end{proof}

\section{Higher Jacobians}

If $u\in\Diff(S)$, for each $p\geq0$ and each choice of $f_1$,~\dots,~$f_p$
in~$S$ we define the \newterm{iterated commutator} $[u,f_1,\dots,f_p]$
recursively: if $p=0$ it is simply equal to $u$, and if $p>0$ we put
$[u,f_1,\dots,f_p]=[[u,f_1,\dots,f_{p-1}],f_p]$, with the outer brackets
denoting the usual commutator of the associative algebra~$\Diff(S)$. It is
easy to see that the expression~$[u,f_1,\dots,f_p]$ depends left $S$-linearly
on~$u$ and that it is a symmetric function of~$f_1$,~\dots,$f_p$.

We say that a differential operator~$u\in\Diff(S)$ has \newterm{order at
most~$p$} if for all $f_1$,~\dots,~$f_{p+1}\in S$ we have that
$[u,f_1,\dots,f_{p+1}]=0$ and we write $F_p\Diff(S)$ the subset
of~$\Diff(S)$ of all operators of order at most~$p$. We obtain in this way
an exhaustive increasing algebra filtration on~$\Diff(S)$, essentially by
definition of this algebra, and the associated graded algebra~$\gr\Diff(S)$
is commutative and, in fact, isomorphic to a polynomial algebra on~$2\ell$
variables.

If $u$ is a non-zero element of~$\Diff(S)$, there is a unique non-negative
integer~$p$ such that $u\in F_p\Diff(S)\setminus F_{p-1}\Diff(S)$, which we
call the \newterm{order} of~$u$, and the \newterm{principal symbol}
$\sigma(u)$ of~$u$ is the class $u+F_{p-1}\Diff(S)$ in the $p$th
homogeneous component $\gr_p\Diff(S)=F_p\Diff(S)/F_{p-1}\Diff(S)$ of the
graded algebra~$\gr\Diff(S)$. Since $F_0\Diff(S)=S$ and $F_{-1}\Diff(S)=0$,
we can identify $\gr_0\Diff(S)$ with~$S$, and with this in mind the
principal symbol of a polynomial~$f\in S\subseteq\Diff(S)$ is~$f$ itself.

\medskip

Let $p\geq1$. We will consider families $(u_{\idx{i}})_{\idx{i}\in W_p}$ of
elements of~$\Diff(S)$ indexed by~$W_p$ or, equivalently, elements
of~$\Diff(S)^{W_p}$. In particular, if
$\theta=(\theta_1,\dots,\theta_\ell)$ is an element of~$\Diff(S)^\ell$, we
will write $\theta^{(p)}$ the element $(u_{\idx{i}})_{\idx{i}\in W_p}$
of~$\Diff(S)^{W_p}$ such that $u_{\idx{i}}=\theta_{i_1}\cdots\theta_{i_p}$
for all $\idx{i}=(i_1,\dots,i_p)\in W_p$. On the other hand, if $u=
(u_{\idx{i}})_{\idx{i}\in W_p}$ is an element of~$\Diff(S)^{W_p}$ and
$w\in\Diff(S)$ and $\idx{j}\in W_p$, we will write $\subst{u}{w}{\idx{j}}$
the family $(v_{\idx{i}})_{\idx{i}\in W_p}\in\Diff(S)^{W_p}$ such that
$v_{\idx{j}}=w$ and $v_{\idx{i}}=u_{\idx{i}}$ for all $\idx{i}\in
W_p\setminus\{\idx{j}\}$: in other words, $\subst{u}{w}{\idx{j}}$ is
obtained form~$u$ by replacing the $\idx{j}$th component by~$w$.

If $f=(f_1,\dots,f_\ell)\in S^\ell$ and $u=(u_\idx{i})_{\idx{i}\in
W_p}\in\Diff(S)^{W_p}$, we define the \newterm{$p$th Jacobian of~$f$ with
respect to~$u$} to be 
  \[
  \frac{\partial^p f}{\partial^pu}
    = \det\bigl([u_{\idx{i}}, f_{j_1}, \dots, f_{j_p}](1)\bigr)_{\idx{i},\idx{j}\in W_p},
  \]
which is an element of~$S$. If $p=1$, then we can identify $W_1$ with
$\{1,\dots,\ell\}$ in an obvious way, so that an element~$u$
of~$\Diff(S)^{W_1}$ is just a $p$-tuple $(u_1,\dots,u_p)$ of elements
of~$\Diff(S)$. If these happen to be elements of~$\Der(S)$, then for all
$f\in S$ we have $[u_i,f](1)=u_i(f)$, and then for all
$f=(f_1,\dots,f_\ell)\in S^\ell$ we have that
  \[
  \frac{\partial^1f}{\partial^1u}
    = \begin{vmatrix}
        u_1(f_1)    & \cdots & u_1(f_\ell)    \\
        \vdots      &        & \vdots         \\
        u_\ell(f_1) & \cdots & u_\ell(f_\ell) 
      \end{vmatrix},
  \]
the usual Jacobian of the functions $f_1$,~\dots,~$f_\ell$ with respect to
the derivations $u_1$,~\dots,~$u_\ell$: this explains the name.

It is easy to see that the Jacobian $\partial^pf/\partial^pu$ depends
$S$-linearly on each component of~$u$ and, using Lemma~\ref{lemma:multi},
that whenever $A=(a_{i,j})$ is an $\ell\times\ell$ matrix with entries
in~$\kk$ we have that
  \[
  \frac{\partial^p (Af)}{\partial^pu}
    = (\det A)^{\binom{p+\ell-1}{\ell}}\cdot \frac{\partial^p f}{\partial^pu}.
  \]
More significantly, we have:

\begin{Lemma}
\begin{thmlist}\fixhspace

\item If any of the entries of~$u\in\Diff(S)^{W_p}$ has order at most
$p-1$, then we have that $\partial^pf/\partial^p u=0$.

\item If $\theta=(\theta_1,\dots,\theta_\ell)\in S^\ell$ has all its
components of order at most~$1$, then
  \[
  \frac{\partial^pf}{\partial^p\theta^{(p)}}
    = \gamma_{\ell,p}
      \left(
      \frac{\partial^1 f}{\partial^1\theta}
      \right)^{\binom{p+\ell-1}{\ell}},
  \]
with $\gamma_{\ell,p}$ the integer defined in Lemma~\ref{lemma:multi}.
\end{thmlist}
\end{Lemma}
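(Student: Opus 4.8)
The plan is to treat the two parts separately, obtaining \thmitem{1} directly from the definition of order and reducing \thmitem{2} to Lemma~\ref{lemma:multi}.

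For \thmitem{1} I would argue that a low-order entry forces a zero row. Concretely, if the component $u_{\idx{k}}$ has order at most $p-1$, then by the definition of order the iterated commutator $[u_{\idx{k}}, f_{j_1}, \dots, f_{j_p}]$ ---which involves $p$ functions--- vanishes for every $\idx{j}\in W_p$. Thus the row indexed by $\idx{i}=\idx{k}$ of the matrix $\bigl([u_{\idx{i}}, f_{j_1}, \dots, f_{j_p}](1)\bigr)_{\idx{i},\idx{j}}$ is identically zero, and the determinant $\partial^p f/\partial^p u$ defining the Jacobian vanishes.

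The substance of the lemma is \thmitem{2}, and the main obstacle will be to compute the entries of the Jacobian matrix explicitly. Writing $u_{\idx{i}}=\theta_{i_1}\cdots\theta_{i_p}$, I would first prove the permanent formula
\[
[\theta_{i_1}\cdots\theta_{i_p}, f_{j_1}, \dots, f_{j_p}](1)
  = \perm\bigl([\theta_{i_a}, f_{j_b}](1)\bigr)_{1\le a,b\le p}
\]
for all $\idx{i},\idx{j}\in W_p$. The tool is that for each $g\in S$ the map $\operatorname{ad}_g=[\,\cdot\,,g]$ is a derivation of $\Diff(S)$, so that the iterated commutator equals $\operatorname{ad}_{f_{j_p}}\cdots\operatorname{ad}_{f_{j_1}}(u_{\idx{i}})$, and expanding this composite by the Leibniz rule across the $p$ factors of $u_{\idx{i}}$ produces a sum over the ways of assigning the operators $\operatorname{ad}_{f_{j_b}}$ to those factors. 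Here the hypothesis that each $\theta_{i_a}$ has order at most~$1$ is decisive: it gives $\operatorname{ad}_g\operatorname{ad}_{g'}(\theta_{i_a})=[\theta_{i_a},g',g]=0$, so any term in which some factor is hit twice dies, and only the terms corresponding to a bijection between the $p$ operators and the $p$ factors survive. In such a term each $\theta_{i_a}$ has been turned into the multiplication operator $[\theta_{i_a}, f_{j_b}]$ of order zero, the whole term is a product of commuting multiplication operators, and its value at~$1$ is the product of the functions $[\theta_{i_a}, f_{j_b}](1)$; summing over bijections produces exactly the permanent. The delicate point to check carefully is that, although the $\theta_{i_a}$ do not commute with the functions created as intermediate factors, in every surviving term all factors have become order-zero operators before evaluation at~$1$, so those intermediate non-commutativities never affect the answer.

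Once the permanent formula is established the conclusion follows at once. Setting $J=\bigl([\theta_a, f_b](1)\bigr)_{1\le a,b\le \ell}$, whose determinant is by definition $\partial^1 f/\partial^1\theta$, the formula shows that the $(\idx{i},\idx{j})$ entry of the matrix defining $\partial^p f/\partial^p\theta^{(p)}$ is precisely the permanent $\perm\bigl([\theta_{i_a}, f_{j_b}](1)\bigr)_{a,b}$, that is, the $(\idx{i},\idx{j})$ entry of the matrix $J^{(p)}$ of Lemma~\ref{lemma:multi}; hence the Jacobian matrix is nothing but $J^{(p)}$. Applying Lemma~\ref{lemma:multi} then gives
\[
\frac{\partial^p f}{\partial^p\theta^{(p)}}
  = \det J^{(p)}
  = \gamma_{\ell,p}\,(\det J)^{\binom{p+\ell-1}{\ell}}
  = \gamma_{\ell,p}\left(\frac{\partial^1 f}{\partial^1\theta}\right)^{\binom{p+\ell-1}{\ell}},
\]
as claimed.
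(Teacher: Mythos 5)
Your proof is correct and follows essentially the same route as the paper: you get \thmitem{1} from a vanishing row of the Jacobian matrix, and \thmitem{2} by establishing the entrywise permanent formula $[\theta_{i_1}\cdots\theta_{i_p},f_{j_1},\dots,f_{j_p}](1)=\perm\bigl([\theta_{i_a},f_{j_b}](1)\bigr)$ and then invoking Lemma~\ref{lemma:multi} applied to the matrix $\bigl([\theta_a,f_b](1)\bigr)$. The only difference is organizational: the paper proves the permanent identity by induction on~$p$, using that the order-zero factor $[\theta_{i_k},f_{j_1}]$ can be moved to the front modulo $F_{p-2}\Diff(S)$ and recognizing a Laplace expansion along the first column, whereas you expand all $p$ applications of $\operatorname{ad}$ in one pass and observe that doubly-hit factors die because $[\theta,g,g']=0$ for $\theta$ of order at most~$1$, leaving exactly the sum over bijections --- the same computation carried out non-inductively.
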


\begin{proof}
If $\idx{i}\in W_p$ is such that $u_{\idx{i}}\in F_{p-1}\Diff(S)$, then
$[u_\idx{i},f_{j_1},\dots,f_{j_p}]=0$ for all $\idx{j}=(j_1,\dots,j_p)\in
W_p$, so that the matrix whose determinant is~$\partial^pf/\partial^pu$ has
a zero column. Of course, the first claim of the lemma follows from this.

To prove the second one, and in view of Lemma~\ref{lemma:multi}, it is
enough that we fix $\idx{i}$,~$\idx{j}\in W_p$ and show that
  \[ \label{eq:jac}
  [\theta_{i_1}\cdots\theta_{i_p},f_{j_1},\dots,f_{j_p}](1)
    = \perm
      \begin{pmatrix}
      [\theta_{i_1},f_{j_1}](1) & \cdots & [\theta_{i_1},f_{j_p}](1) \\
      \vdots                 &        & \vdots                 \\
      [\theta_{i_p},f_{j_1}](1) & \cdots & [\theta_{i_p},f_{j_p}](1) \\
      \end{pmatrix}.
  \]
We have that in~$\Diff(S)$
  \begin{align}\MoveEqLeft{}
  [\theta_{i_1}\cdots\theta_{i_p},f_{j_1},\dots,f_{j_p}]
     = [[\theta_{i_1}\cdots\theta_{i_p},f_{j_1}],f_2,\dots,f_{j_p}] \\
    &= \sum_{k=1}^p
       [\theta_{i_1}
        \cdots\theta_{i_{k-1}}
        [\theta_{i_k},f_1]
        \theta_{i_{k+1}}
        \cdots\theta_{i_p}, f_2,\dots,f_p]. \label{eq:sum}
  \end{align}
For each $k\in\{1,\dots,p\}$ the operators
  \begin{gather}
  \theta_{i_1} \cdots\theta_{i_{k-1}} [\theta_{i_k},f_1] \theta_{i_{k+1}} 
                \cdots\theta_{i_p}
\shortintertext{and}
  [\theta_{i_k},f_1]\theta_{i_1} \cdots\theta_{i_{k-1}}  \theta_{i_{k+1}} 
                \cdots\theta_{i_p}
  \end{gather}
differ by an element of~$F_{p-2}\Diff(S)$, so the sum~\eqref{eq:sum} can be
rewritten as
  \[
  \sum_{k=1}^p
       [[\theta_{i_k},f_1]
        \theta_{i_1}
        \cdots\hat\theta_{i_k}
        \cdots\theta_{i_p}, f_2,\dots,f_p],
  \]
with the accent denoting omission of the marked factor, as usual.
As $[\theta_{i_k},f_1]\in S$ for all $k\in\{1,\dots,p\}$,
the value at~$1\in S$ of this operator is
  \[
  \sum_{k=1}^p
       [\theta_{i_k},f_1](1)\cdot
       [\theta_{i_1}
        \cdots\hat\theta_{i_k}
        \cdots\theta_{i_p}, f_2,\dots,f_p](1).
  \]
Inductively, then, we know that this is equal to
  \[
  \sum_{k=1}^p
       [\theta_{i_k},f_1](1)
       \cdot
      \perm
      \begin{pmatrix}
      [\theta_{i_1},f_{j_2}](1) 
        & \cdots 
        & [\theta_{i_1},f_{j_p}](1) \\
      \vdots                 
        &        
        & \vdots                 \\
      \widehat{[\theta_{i_k},f_{j_2}](1)} 
        & \cdots 
        & \widehat{[\theta_{i_k},f_{j_p}](1)} \\
      \vdots                 
        &        
        & \vdots                 \\
      [\theta_{i_p},f_{j_2}](1) 
        & \cdots 
        & [\theta_{i_p},f_{j_p}](1) \\
      \end{pmatrix},
  \]
which is the Laplace expansion for the permanent that appears on the right
in~\eqref{eq:jac} along the first column.
\end{proof}

\section{Differential operators tangent to a hyperplane arrangement}

As in the introduction, let $\A=\{H_1,\dots,H_r\}$ be a central hyperplane
arrangement in the vector space~$V$, let $\alpha_1$,~\dots,~$\alpha_r\in
V^*$ be such that $H_i=\ker\alpha_i$ for all $i\in\{1,\dots,r\}$ and let
$Q=\prod_{i=1}^r\alpha_i$ be a defining polynomial for~$\A$. For each
$i\in\{1,\dots,r\}$ we let $\beta_i=Q/\alpha_i$, so that
$Q=\alpha_i\beta_i$ and $\alpha_i$ does not divide~$\beta_i$.

For each right ideal~$I$ in~$\Diff(S)$ we write $\II(I)$ the
\newterm{idealizer} if~$I$ in~$\Diff(S)$, that is, the subalgebra
  \[
  \II(I) = \{ u\in\Diff(S) : uI\subseteq I \},
  \]
which is the largest subalgebra of~$\Diff(S)$ which contains~$I$ as a
bilateral ideal. If $f\in\Diff(S)$, we write
  \[
  \II(f^\infty\Diff(S)) = \bigcap_{t\geq1}\II(f^t\Diff(S)).
  \]
The \newterm{algebra~$\Diff(\A)$ of differential operators tangent to~$\A$}
is the subalgebra $\Diff(Q^\infty\Diff(S))$ of~$\Diff(S)$. We want to give
a slightly more convenient description of~$\Diff(\A)$, and to do that we
start with the following simple observation:

\begin{Lemma}\label{lemma:fact}
Let $\alpha$ and~$\beta$ be elements of~$S$ without a non-constant common
factor. If $u\in\Diff(S)$, then
  \[
  u\beta\in\alpha\Diff(S) \implies u\in\alpha\Diff(S).
  \]
\end{Lemma}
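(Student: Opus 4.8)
The plan is to recast the implication as an injectivity statement on a quotient module and then verify that injectivity after passing to the (commutative) associated graded algebra. Observe first that $\alpha\Diff(S)$ is a \emph{right} ideal of $\Diff(S)$, so right multiplication by $\beta$ descends to an endomorphism of the right module $M=\Diff(S)/\alpha\Diff(S)$. Writing $\bar u$ for the class of $u$, the hypothesis $u\beta\in\alpha\Diff(S)$ says exactly that $\bar u\beta=0$ in $M$, and the conclusion $u\in\alpha\Diff(S)$ says that $\bar u=0$. Hence it is enough to prove that right multiplication by $\beta$ is injective on $M$.

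The first ingredient is purely commutative. Since $S$ is a polynomial ring, hence a unique factorization domain, and $\alpha$ and $\beta$ share no non-constant factor, the image of $\beta$ is a non-zero-divisor in $A=S/\alpha S$: if $\alpha$ divides $\beta g$ for some $g\in S$, then each irreducible factor of $\alpha$ divides $\beta g$ with its full multiplicity and, failing to divide $\beta$, must divide $g$, so $\alpha$ divides $g$. The second ingredient transports the problem to $\gr\Diff(S)$. Equip $\Diff(S)$ with the order filtration $F_\bullet$ and give $M$ the induced filtration. As multiplication by the order-zero operator $\beta$ preserves the filtration, it induces an endomorphism of $\gr M$, and because $\gr\Diff(S)$ is commutative this induced map is simply multiplication by $\sigma(\beta)=\beta$. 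Moreover, since $\gr\Diff(S)$ is a domain and $\alpha\neq0$ one has $\sigma(\alpha v)=\alpha\sigma(v)$ for $v\neq0$, whence $\gr(\alpha\Diff(S))=\alpha\gr\Diff(S)$ and therefore $\gr M\cong\gr\Diff(S)/\alpha\gr\Diff(S)$. Writing $\gr\Diff(S)=S[\xi_1,\dots,\xi_\ell]$ as the stated polynomial algebra on $2\ell$ variables, this gives $\gr M\cong A[\xi_1,\dots,\xi_\ell]$, on which multiplication by $\beta$ is injective because $\beta$ is a non-zero-divisor in $A$.

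It then remains to lift injectivity from $\gr M$ back to $M$ by the standard symbol argument: if $\bar u\neq0$, let $p$ be its order, so that $\sigma(\bar u)\in\gr_p M$ is non-zero; the injectivity just proved gives $\sigma(\bar u)\cdot\beta\neq0$, which forces $\bar u\beta$ to have order exactly $p$ with $\sigma(\bar u\beta)=\sigma(\bar u)\cdot\beta\neq0$, and in particular $\bar u\beta\neq0$. This establishes the required injectivity and hence the lemma.

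The one step that needs genuine care is the passage to the associated graded: namely the identification $\gr M\cong A[\xi_1,\dots,\xi_\ell]$ via $\gr(\alpha\Diff(S))=\alpha\gr\Diff(S)$, and the verification that right multiplication by the order-zero operator $\beta$ really does induce ordinary multiplication by $\beta$ on the commutative graded ring. Once this is in place the rest is formal, and the coprimality hypothesis enters only through the elementary fact that $\beta$ is a non-zero-divisor modulo $\alpha$.
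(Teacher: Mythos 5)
Your proof is correct and rests on the same two pillars as the paper's: passage to the commutative associated graded algebra $\gr\Diff(S)\cong S[\xi_1,\dots,\xi_\ell]$ and the observation that coprimality in the UFD $S$ makes $\beta$ a non-zero-divisor modulo $\alpha$. The difference is only packaging: the paper runs an explicit induction on the order of~$u$, correcting at each step by $u\mapsto u-\alpha w$ with $\sigma(u)=\alpha\sigma(w)$, which is precisely the unrolled form of your ``injectivity on $\gr M$ lifts to $M$'' argument together with the strictness identity $\gr(\alpha\Diff(S))=\alpha\gr\Diff(S)$ that you rightly single out as the step needing care.
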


\begin{proof}
Let $u\in\Diff(S)$ and suppose that there exists a $v\in\Diff(S)$ such that
  \[ \label{eq:ub}
  u\beta=\alpha v.
  \]
If $u$ is zero then of course $u\in\alpha\Diff(S)$, so we suppose it is
not. We may then consider the order~$p$ of~$u$, so that $u\in
F_p\Diff(S)\setminus F_{p-1}\Diff(S)$, and proceed by induction on~$p$.

Suppose first that $p=0$, so that in fact $u\in S$. It follows from
equation~\eqref{eq:ub} that we necessarily have that $v\in S$ and, since
$\alpha$ does not divide~$\beta$ in~$S$, that $\alpha$ divides~$u$: in
particular, we have that $u\in\alpha S\subseteq\alpha\Diff(S)$, as we want.

Suppose next that $p>0$. From~\eqref{eq:ub} we see that $v\in
F_p\Diff(S)\setminus F_{p-1}\Diff(S)$ and that
$\sigma(u)\beta=\alpha\sigma(v)$ in the associated graded
algebra~$\gr\Diff(S)$. As $\alpha$ does not divide~$\beta$, we see that
$\alpha$ divides~$\sigma(u)$ and, therefore, that there exists an operator $w\in
F_p\Diff(S)\setminus F_{p-1}\Diff(S)$ of order~$p$ such that $u' = u - \alpha w \in
F_{p-1}\Diff(S)$. Now $u'\beta=u\beta-\alpha
w\beta=\alpha(v-w\beta)\in\alpha\Diff(S)$ and $u'$ has order at most $p-1$,
so that inductively we know that $u'\in\alpha\Diff(S)$. We can then conclude
that $u=\alpha w+u'\in\alpha\Diff(S)$, completing the induction.
\end{proof}

Using this lemma, we can prove that $\Diff(\A)$ is the intersection of the
infinite idealizers of the defining equations of the hyperplanes of the
arrangement, much as the Lie algebra~$\Der(\A)$ is equal to
$\bigcap_{i=1}^r\Der(\alpha_iS)$; see~\cite{OT}*{Proposition 4.8}.

\begin{Proposition}
We have $\Diff(\A)=\bigcap_{i=1}^r\II(\alpha_i^\infty\Diff(S))$.
\end{Proposition}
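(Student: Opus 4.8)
The plan is to reduce everything to the elementary reformulation of idealizer membership: since $f^t\Diff(S)$ is the right ideal of $\Diff(S)$ generated by $f^t$, an operator $u$ lies in $\II(f^t\Diff(S))$ if and only if $uf^t\in f^t\Diff(S)$ (one direction is immediate, and for the other it suffices to evaluate the defining inclusion $u\cdot f^t\Diff(S)\subseteq f^t\Diff(S)$ on the generator $f^t$). Applying this with $f=Q$ and with $f=\alpha_i$, the asserted equality becomes the statement that, for $u\in\Diff(S)$, one has $uQ^t\in Q^t\Diff(S)$ for every $t\geq1$ if and only if $u\alpha_i^t\in\alpha_i^t\Diff(S)$ for every $i$ and every $t\geq1$. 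I would prove the two implications separately, keeping in mind that they are not symmetric: left multiplication by elements of $S$ interacts well with the left $S$-module structure of $\Diff(S)$, whereas right multiplication does not, so each inclusion calls for a different tool.

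For the inclusion $\subseteq$ I would fix $i$ and $t$ and use the factorization $Q=\alpha_i\beta_i$, so that $Q^t=\alpha_i^t\beta_i^t$ and $Q^t\Diff(S)\subseteq\alpha_i^t\Diff(S)$. From $uQ^t\in Q^t\Diff(S)$ we then get $(u\alpha_i^t)\beta_i^t=uQ^t\in\alpha_i^t\Diff(S)$. Since $\alpha_i$ is a prime of $S$ that does not divide $\beta_i$, the elements $\alpha_i^t$ and $\beta_i^t$ have no non-constant common factor, so Lemma~\ref{lemma:fact}, applied with its operator taken to be $u\alpha_i^t$ and with the pair $(\alpha,\beta)=(\alpha_i^t,\beta_i^t)$, yields $u\alpha_i^t\in\alpha_i^t\Diff(S)$, as wanted. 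This is exactly the situation Lemma~\ref{lemma:fact} was designed for, an unknown operator carrying a polynomial factor on the right, which is why the one-sided cancellation it provides suffices here.

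For the reverse inclusion I would fix $t$ and first observe that, writing $u\alpha_i^t=\alpha_i^t v_i$ with $v_i\in\Diff(S)$, the identity $uQ^t=u\alpha_i^t\beta_i^t=\alpha_i^t v_i\beta_i^t$ shows $uQ^t\in\alpha_i^t\Diff(S)$ for each $i$, so that $uQ^t\in\bigcap_{i=1}^r\alpha_i^t\Diff(S)$. It then remains to identify this intersection, and here I would use that $\Diff(S)$ is free as a left $S$-module on the monomial differential operators $\partial^\gamma$: left multiplication by $\alpha_i^t$ acts coefficientwise in this basis, giving $\alpha_i^t\Diff(S)=\bigoplus_\gamma(\alpha_i^tS)\partial^\gamma$, whence $\bigcap_{i=1}^r\alpha_i^t\Diff(S)=\bigoplus_\gamma\bigl(\bigcap_{i=1}^r\alpha_i^tS\bigr)\partial^\gamma$. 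Because $S$ is a unique factorization domain and the $\alpha_i$ are pairwise non-associate primes, $\bigcap_{i=1}^r\alpha_i^tS=Q^tS$, and reassembling gives $\bigcap_{i=1}^r\alpha_i^t\Diff(S)=Q^t\Diff(S)$. Thus $uQ^t\in Q^t\Diff(S)$ for all $t$, which finishes this inclusion.

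The genuinely non-commutative step, and the one I expect to be the main obstacle, is the inclusion $\subseteq$, where one must cancel the right factor $\beta_i^t$ from $u\alpha_i^t\beta_i^t$: this cannot be read off the left $S$-module structure, since right multiplication by $\beta_i^t$ mixes the basis elements $\partial^\gamma$ through commutators, and it is precisely this cancellation that Lemma~\ref{lemma:fact} supplies. The reverse inclusion, by contrast, only ever uses left multiplication by elements of $S$ and so collapses into a purely commutative computation of an intersection of principal ideals in $S$.
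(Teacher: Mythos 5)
Your proof is correct and takes essentially the same route as the paper's: one containment is obtained, exactly as in the paper, by applying Lemma~\ref{lemma:fact} with the operator $u\alpha_i^t$ and the coprime pair $(\alpha_i^t,\beta_i^t)$ to cancel the right factor $\beta_i^t$, and the other by computing $\bigcap_{i=1}^r\alpha_i^t\Diff(S)=Q^t\Diff(S)$ from the freeness of $\Diff(S)$ as a left $S$-module together with $\bigcap_{i=1}^r\alpha_i^tS=Q^tS$. The only addition is your explicit preliminary remark that membership in $\II(f^t\Diff(S))$ can be tested on the generator $f^t$, which the paper uses tacitly.
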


\begin{proof}
Let $u$ be an element of that intersection, so that for each
$i\in\{1,\dots,r\}$ and $t\geq1$ there is a $u_{i,t}\in\Diff(S)$ such that
$u\alpha_i^t=\alpha_i^tu_{i,t}$. 

Let $t\geq1$. If $i\in\{1,\dots,r\}$, then we have that
  \[
  uQ^t = u\alpha_i^t\beta_i^y = \alpha_i^i u_{i,t}\beta_i^t \in \alpha_i^t\Diff(S),
  \]
and therefore
  \[ \label{eq:dd}
  uQ^t \in \bigcap_{i=1}^r \alpha_i^t\Diff(S) = Q^t\Diff(S),
  \]
so that $u$ is in~$\II(Q^t\Diff(S))$. The equality in~\eqref{eq:dd} follows
immediately from the fact that $\Diff(S)$ is free as a left module over its
subalgebra~$S$ and in~$S$ we have that $\bigcap_{i=1}^r\alpha_i^tS=Q^tS$.
Of course, this implies that $u\in\Diff(\A)$.

To prove the reverse containment, let $u\in\Diff(S)$ be an element
of~$\Diff(\A)$, so that for each $t\geq1$ there is a $v_t\in\Diff(S)$ such
that $uQ^t=Q^tv_t$. If $i\in\{1,\dots,r\}$ and $t\geq1$, then we have that
  \[
  u\alpha_i^t\beta_i^t 
        = uQ^t = Q^tv_t = \alpha_i^t\beta_i^tv_t \in \alpha_i^t\Diff(S)
  \]
and we may conclude that $u\alpha_i^t\in\alpha_i^t\Diff(S)$ using
Lemma~\ref{lemma:fact}. We see in this way that $u$ is in
$\II(\alpha_i^\infty\Diff(S))$ and, therefore, in the intersection
described in the proposition. 
\end{proof}

\begin{Lemma}\label{lemma:mult}
Let $u\in\Diff(\A)$ and let $i\in\{1,\dots,r\}$.
\begin{thmlist}

\item For all $t\geq1$ we have $u(\alpha_i^tS)\subseteq\alpha_i^tS$.

\item Let $p$,~$q\in\NN_0$ be such that $0\leq q\leq p$. If at least $q$
components of of the $p$-tuple $(f_1,\dots,f_p)\in S^p$ are equal
to~$\alpha_i$, then $[u,f_1,\dots,f_p](1)\in\alpha_i^q S$.

\end{thmlist}
\end{Lemma}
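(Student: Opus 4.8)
The plan is to handle the two statements in turn, obtaining the second from the first together with an explicit expansion of the iterated commutator. Part~\thmitem{1} should follow directly from the idealizer description of $\Diff(\A)$: by the Proposition identifying $\Diff(\A)$ with $\bigcap_{i=1}^r\II(\alpha_i^\infty\Diff(S))$, the operator $u$ lies in $\II(\alpha_i^t\Diff(S))$ for every $t\geq1$, so there is some $w\in\Diff(S)$ with $u\alpha_i^t=\alpha_i^tw$ as operators. Evaluating this identity at an arbitrary $f\in S$ gives $u(\alpha_i^tf)=\alpha_i^t\,w(f)\in\alpha_i^tS$, and letting $f$ range over $S$ yields exactly $u(\alpha_i^tS)\subseteq\alpha_i^tS$.

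For part~\thmitem{2} the key tool I would set up is a closed form for $[u,f_1,\dots,f_p](1)$. Writing $M_f$ for multiplication by $f$, the commutator of an operator $v$ with $f$ is $vM_f-M_fv$, so $[u,f_1,\dots,f_p]$ is $\prod_{k=1}^p(R_k-L_k)$ applied to $u$, where $R_k$ and $L_k$ denote right and left multiplication by $f_k$ in the algebra $\Diff(S)$. These $p$ operators commute pairwise, since left and right multiplications always commute and $S$ is commutative, so expanding the product and evaluating at $1$ produces
\[
[u,f_1,\dots,f_p](1)
   = \sum_{I\subseteq\{1,\dots,p\}}(-1)^{p-\abs{I}}
     \Bigl(\prod_{j\notin I}f_j\Bigr)\,u\Bigl(\prod_{i\in I}f_i\Bigr);
\]
the same identity can instead be proved by a short induction on $p$ from the recursion $[u,f_1,\dots,f_p]=[[u,f_1,\dots,f_{p-1}],f_p]$.

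With this formula in hand the conclusion of part~\thmitem{2} reduces to a termwise divisibility count. Using that the iterated commutator is symmetric in the $f_k$, I may assume $f_1=\cdots=f_q=\alpha_i$. Fix $I\subseteq\{1,\dots,p\}$ and let $a=\abs{I\cap\{1,\dots,q\}}$ be the number of these distinguished factors lying in $I$. Then $\prod_{i\in I}f_i$ is divisible by $\alpha_i^a$, so $u(\prod_{i\in I}f_i)\in\alpha_i^aS$ by part~\thmitem{1} (and trivially so when $a=0$), while the complementary product $\prod_{j\notin I}f_j$ retains the other $q-a$ distinguished factors and hence lies in $\alpha_i^{q-a}S$. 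Each summand therefore lies in $\alpha_i^{q-a}\alpha_i^aS=\alpha_i^qS$, and so does $[u,f_1,\dots,f_p](1)$.

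I expect the only delicate point to be the middle step: pinning down the exact subset expansion of the iterated commutator and checking its signs and multiplicities. Once it is secured, part~\thmitem{1} does all the real work, and the accounting that makes the count succeed is transparent—in every term the $q$ guaranteed factors of $\alpha_i$ split as the $a$ factors that are ``protected'' inside $u$ and thus preserved by part~\thmitem{1}, together with the remaining $q-a$ factors sitting visibly in the outer product—so that no separate induction on $p$ is needed.
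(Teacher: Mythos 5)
Your proposal is correct and takes essentially the same route as the paper: part \thmitem{1} by evaluating the idealizer identity $u\alpha_i^t=\alpha_i^tv$ at an arbitrary $f\in S$, and part \thmitem{2} by the same subset expansion of the iterated commutator (your sum over $I$ is the paper's sum over the complementary subset $J$, with matching signs) followed by a termwise divisibility count using part \thmitem{1}. The justification you supply for the expansion via commuting left and right multiplications, and the normalization $f_1=\cdots=f_q=\alpha_i$ by symmetry, are harmless elaborations of what the paper dismisses with ``one sees easily''.
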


\begin{proof}
\thmitem{1} Let $t\geq1$. As $u\in\Diff(\A)$, there is a $v\in\Diff(S)$
such that $u\alpha_i^t=\alpha_i^tv$, and then for all $f\in S$ we have
  \[
  u(\alpha_i^t f)
        = (u\alpha_i^t)(f)
        = (\alpha_i^tv)(f)
        = \alpha_i^t v(f) 
        \in \alpha_i^tS.
  \]

\thmitem{2} Suppose that at least $q$ components of $(f_1,\dots,f_p)\in
S^p$ are equal to~$\alpha_i$ and let $I=\{1,\dots,p\}$. One sees easily
that
  \[
  [u,f_1,\dots,f_p](1) =
     \sum_{J\subseteq I}
     (-1)^{\abs{J}}
     \prod_{j\in J}f_{j}
     \cdot
     u\left(\prod_{k\in I\setminus J} f_{k}\right).
  \]
The first part of the lemma implies that for each $J\subseteq I$ the $J$th
summand of this sum is in $\alpha_i^qS$, so that so is the sum.
\end{proof}

\begin{Proposition}\label{prop:jdiffa}
Let $f=(x_1,\dots,x_\ell)$ be an ordered basis of~$V^*$ and let $p\in\NN$.
If $(u_{\idx{i}})_{\idx{i}\in W_p}$ is a family of elements of~$\Diff(\A)$
indexed by~$W_p$, then 
  \[
  \frac{\partial^pf}{\partial^p u} \in Q^{\binom{p+\ell-1}{\ell}}S.
  \]
\end{Proposition}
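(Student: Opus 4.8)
The plan is to reduce the statement to a divisibility by each individual defining form and then to exploit the freedom to choose the basis $f$. Write $m=\binom{p+\ell-1}{\ell}$ for brevity. Since $S$ is a polynomial ring, hence a unique factorization domain, and since the linear forms $\alpha_1,\dots,\alpha_r$ are pairwise non-proportional and therefore pairwise coprime, the product $Q^m=\prod_{i=1}^r\alpha_i^m$ divides $\partial^pf/\partial^pu$ if and only if $\alpha_i^m$ divides it for every $i\in\{1,\dots,r\}$. I would therefore fix an index $i$ and work to establish this last divisibility; the desired membership $\partial^pf/\partial^pu\in Q^mS$ follows at once once it is known for all $i$.

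To treat a fixed $i$, I would first observe that the statement is insensitive to a linear change of the basis $f$. Indeed, the base-change identity recorded just before the lemma gives $\partial^p(Af)/\partial^pu=(\det A)^m\,\partial^pf/\partial^pu$ for any invertible $\ell\times\ell$ matrix $A$ over $\kk$, and $(\det A)^m$ is then a nonzero scalar; hence $\alpha_i^m$ divides the Jacobian computed with $f$ exactly when it divides the Jacobian computed with any basis obtained from $f$ by an invertible linear change. I would use this to replace $f$ by a basis whose first component is $\alpha_i$ itself, which is possible because $\alpha_i$ is a nonzero element of $V^*$ and can be completed to a basis. From now on I assume $f_1=\alpha_i$.

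The key input is then Lemma~\ref{lemma:mult}\thmitem{2}. Fix a column index $\idx{j}=(j_1,\dots,j_p)\in W_p$ of the matrix whose determinant is $\partial^pf/\partial^pu$. The entries in that column are the elements $[u_{\idx{i}},f_{j_1},\dots,f_{j_p}](1)$ with $\idx{i}\in W_p$ varying, while the tuple $(f_{j_1},\dots,f_{j_p})$ stays fixed along the column. Since $f_1=\alpha_i$, exactly $j'_1$ of its entries equal $\alpha_i$, and as each $u_{\idx{i}}$ lies in $\Diff(\A)$ I may apply Lemma~\ref{lemma:mult}\thmitem{2} with $q=j'_1$ to conclude that every entry of column $\idx{j}$ lies in $\alpha_i^{j'_1}S$. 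Factoring $\alpha_i^{j'_1}$ out of the column $\idx{j}$, for each $\idx{j}\in W_p$ in turn, shows that the determinant is divisible by $\prod_{\idx{j}\in W_p}\alpha_i^{j'_1}=\alpha_i^{\sum_{\idx{j}\in W_p}j'_1}$.

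Finally I would invoke the exponent computation already carried out in the proof of Lemma~\ref{lemma:multi}, namely that $\sum_{\idx{j}\in W_p}j'_1=\binom{p+\ell-1}{\ell}=m$. This yields $\alpha_i^m\mid\partial^pf/\partial^pu$, and since $i$ was arbitrary the coprimality argument of the first paragraph gives $\partial^pf/\partial^pu\in Q^mS$. The only step that requires genuine care, as opposed to bookkeeping, is the reduction to a single $\alpha_i$ combined with the basis change: one must be sure that the scalar $(\det A)^m$ produced is truly invertible—which it is, being a nonzero power of a nonzero scalar—so that divisibility is preserved in both directions. Everything else is the column-by-column factorization governed by Lemma~\ref{lemma:mult}\thmitem{2}.
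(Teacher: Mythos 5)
Your proof is correct and follows essentially the same route as the paper: a linear change of basis making $\alpha_i$ the first coordinate, Lemma~\ref{lemma:mult}\thmitem{2} applied column by column, and the exponent count $\sum_{\idx{j}\in W_p}j'_1=\binom{p+\ell-1}{\ell}$. The only organizational difference is that you prove divisibility by each $\alpha_i^{\binom{p+\ell-1}{\ell}}$ separately and assemble the conclusion by coprimality in the UFD $S$ at the end---explicitly invoking the base-change identity with its unit factor $(\det A)^{\binom{p+\ell-1}{\ell}}$ to return to the original basis, a step the paper leaves implicit---whereas the paper intersects $\bigcap_{k}\alpha_k^{j'_1}S=Q^{j'_1}S$ column by column.
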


\begin{proof}
Let $k\in\{1,\dots,r\}$ and let $a_1$,~\dots,~$a_\ell\in\kk$ be such that
$\alpha_k=\sum_{j=1}^\ell a_jx_j$. Without loss of generality, we can
assume that $a_1\neq0$ and then, of course, $g=(\alpha_k,x_2,\dots,x_\ell)$
is another ordered basis of~$V^*$, whose elements we relabel
$y_1$,~\dots,~$y_\ell$. If $\idx{i}$,~$\idx{j}\in W_p$, then $\alpha_k$
appears $j'_1$ times in the $p$-tuple $(y_{j_1},\dots,y_{j_p})$ and the
second part of Lemma~\ref{lemma:mult} tells us that
  \[
  [u_{\idx{i}},y_{j_1},\dots,y_{j_p}](1) 
        \in \alpha_k^{j'_1} S.
  \]
This implies that $[u_{\idx{i}},y_{j_1},\dots,y_{j_p}](1)\in\bigcap_{k=1}^r
\alpha_k^{j'_1}S=Q^{j'_1}S$, so that the $\idx{j}$th column of the matrix
whose determinant is the Jacobian $\partial^p g/\partial^p u$ is divisible
by~$Q^{j'_1}$. Of course, it follows from this that the Jacobian itself is
divisible by
  \[
  \prod_{\idx{j}\in W_p}Q^{j'_1} = Q^{\binom{p+\ell-1}{\ell}}.
  \]
This is what we set out to prove.
\end{proof}

Let us denote $\Delta(\A)$ the subalgebra of~$\Diff(S)$ generated by~$S$
and~$\Der(\A)$.

\begin{Proposition}
The subalgebra~$\Delta(\A)$ is contained in~$\Diff(\A)$. If
$\{\theta_1,\dots,\theta_m\}$ is a subset of~$\Der(\A)$ that generates it
as a left $S$-module, then the set 
  \[
  \{\theta_{i_1}\dots\theta_{i_q} : 0\leq q\leq p, 1\leq i_1\leq\cdots\leq i_q\leq m\}
  \]
generates $\Delta(\A)\cap F_p\Diff(S)$ as a left $S$-module.
\end{Proposition}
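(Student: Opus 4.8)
The first assertion is a direct verification. Each idealizer $\II(Q^t\Diff(S))$ is a subalgebra of $\Diff(S)$, hence so is their intersection $\Diff(\A)$, and it is therefore enough to check that the two families of generators of $\Delta(\A)$, namely $S$ and $\Der(\A)$, are contained in it. If $f\in S$ then $f$ commutes with $Q^t$ in $\Diff(S)$, so that $fQ^t\Diff(S)=Q^tf\Diff(S)\subseteq Q^t\Diff(S)$ and $f\in\II(Q^t\Diff(S))$ for all $t\geq1$. If instead $\theta\in\Der(\A)$ then $\theta Q^t=Q^t\theta+[\theta,Q^t]=Q^t\theta+\theta(Q^t)$, and since $\theta(Q)\in QS$ we have $\theta(Q^t)=tQ^{t-1}\theta(Q)\in Q^tS$; it follows that $\theta Q^t\in Q^t\Diff(S)$ and that $\theta\in\II(Q^t\Diff(S))$ for every $t\geq1$. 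Thus $S\cup\Der(\A)\subseteq\Diff(\A)$ and consequently $\Delta(\A)\subseteq\Diff(\A)$.

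For the second assertion write $G_p$ for the left $S$-submodule of $\Diff(S)$ generated by the set in the statement, that is, by the sorted monomials $\theta_{i_1}\cdots\theta_{i_q}$ with $0\leq q\leq p$ and $1\leq i_1\leq\cdots\leq i_q\leq m$, the empty product being $1$ so that $G_0=S$. One inclusion is clear: each $\theta_i$ has order at most $1$, a product of $q\leq p$ of them has order at most $p$, and therefore $G_p\subseteq\Delta(\A)\cap F_p\Diff(S)$. To begin the reverse inclusion I would first show that $\bigcup_pG_p$ is a subalgebra, and hence equal to $\Delta(\A)$; this rests on the two moves $\theta_if=f\theta_i+\theta_i(f)$, which pulls scalars to the left, and $\theta_i\theta_j=\theta_j\theta_i+[\theta_i,\theta_j]$ with $[\theta_i,\theta_j]\in\Der(\A)=\sum_kS\theta_k$, the bracket lying in $\Der(\A)$ because the latter is a Lie subalgebra. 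Applying these repeatedly rewrites any product of generators as an $S$-combination of sorted monomials, all correction terms having strictly smaller length.

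The substance of the statement is the opposite inclusion $\Delta(\A)\cap F_p\Diff(S)\subseteq G_p$, which I would prove by induction on $p$, the case $p=0$ being $\Delta(\A)\cap S=S$. Let $u\in\Delta(\A)\cap F_p\Diff(S)$ and let $\bar u$ be its image in $\gr_p\Diff(S)$. The key claim is that $\bar u$ lies in the $S$-submodule $M_p\subseteq\gr_p\Diff(S)$ generated by the symbols $\sigma(\theta_{i_1})\cdots\sigma(\theta_{i_p})$ of the monomials of length exactly $p$; granting this, I may choose $v=\sum_{\idx i}f_{\idx i}\,\theta_{i_1}\cdots\theta_{i_p}\in G_p$ with the same image $\bar v=\bar u$, so that $u-v\in\Delta(\A)\cap F_{p-1}\Diff(S)$, and the inductive hypothesis puts $u-v$ in $G_{p-1}\subseteq G_p$ and hence $u$ in $G_p$. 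In this way the whole matter is reduced to the single identity $\gr_p\Delta(\A)=M_p$: concretely, that a combination $\sum_{\idx i}f_{\idx i}\,\theta_{i_1}\cdots\theta_{i_p}$ whose principal symbol $\sum_{\idx i}f_{\idx i}\,\sigma(\theta_{i_1})\cdots\sigma(\theta_{i_p})$ vanishes in the polynomial algebra $\gr\Diff(S)$ must already belong to $G_{p-1}$.

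This reduction is where I expect the real difficulty, since the equality $\gr_p\Delta(\A)=M_p$ is an assertion about the associated graded of the subalgebra $\Delta(\A)$ and fails for subalgebras generated in order one in general. The inclusion $M_p\subseteq\gr_p\Delta(\A)$ is clear, and for the reverse I would argue as follows. Any relation $\sum_ig_i\theta_i=0$ among the generators is an honest identity in $\Diff(S)$, so $\sum_ig_i\,\theta_i\theta_{j_1}\cdots\theta_{j_{p-1}}=0$ as well; sorting each product $\theta_i\theta_{j_1}\cdots\theta_{j_{p-1}}$ into a sorted monomial of length $p$ modulo $G_{p-1}$, this identity shows that the resulting $S$-combination of sorted length-$p$ monomials lies in $G_{p-1}$, so all relations among the symbols that come from linear syzygies of $\theta_1,\dots,\theta_m$ are harmless. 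The crux is then to show that \emph{every} relation among the linear forms $\sigma(\theta_i)$ in $\gr\Diff(S)$ is generated by these linear ones. Here I would pass to the field of fractions $\kk(V)$ of $S$: over it $\Der(\A)$ becomes the full, free module of derivations---indeed $Q\,\partial_i\in\Der(\A)$ for each $i$, so $\Der(\A)$ already has full rank---and the Poincaré–Birkhoff–Witt theorem for the corresponding Lie–Rinehart algebra yields the equality of graded algebras after localization. The remaining, and genuinely delicate, point is to descend this localized equality back to $S$, that is, to rule out the $S$-torsion that passing to $\kk(V)$ might introduce into $\gr_p\Delta(\A)/M_p$.
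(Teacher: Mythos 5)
Your proof of the first assertion is correct and is the same as the paper's (the computation $\theta Q^t=Q^t\theta+tQ^{t-1}\theta(Q)\in Q^t\Diff(S)$). For the second assertion, be aware that your second paragraph --- rewriting via $\theta_i f=f\theta_i+\theta_i(f)$ and $[\theta_i,\theta_j]\in\Der(\A)=\sum_kS\theta_k$ --- is the \emph{entirety} of the paper's own proof, which simply declares the claim ``a direct consequence of the commutation relation.'' You are right that this rewriting only yields $\Delta(\A)=\bigcup_pG_p$, together with the observation that the number of derivation factors never increases, so that anything expressible with at most $p$ factors lies in $G_p$; to identify $\Delta(\A)\cap F_p\Diff(S)$ with $G_p$ one needs in addition that $G_P\cap F_{P-1}\Diff(S)\subseteq G_{P-1}$, i.e.\ your $\gr_p\Delta(\A)=M_p$: a combination of sorted length-$P$ monomials whose symbol vanishes must already lie in $G_{P-1}$. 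Your proposal correctly isolates this as the crux but then, by its own admission, stops: the localization-plus-PBW sketch establishes the equality only after tensoring with the field of fractions of~$S$, and the descent step (ruling out torsion in $\gr_p\Delta(\A)/M_p$) is left unproved. As it stands the proposal is therefore incomplete.

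The missing step closes with far less machinery than Lie--Rinehart PBW, at least in the case the paper actually uses. If $\theta_1,\dots,\theta_\ell$ is a \emph{basis} of a free $\Der(\A)$ and $\theta_i=\sum_ja_{ij}\partial_j$, then $\det(a_{ij})\neq0$: otherwise a dependence over the fraction field would, after clearing denominators, give a nontrivial $S$-linear relation among basis elements. Hence the symbols $\sigma(\theta_1),\dots,\sigma(\theta_\ell)$ are algebraically independent over the fraction field, the sorted monomials $\theta_{i_1}\cdots\theta_{i_q}$ are left $S$-linearly independent, every element of $G_P$ has a unique expression, and its order is the largest length occurring in it; thus $G_P\cap F_p\Diff(S)=G_p$ tautologically --- there are no symbol relations to lift and no torsion to exclude. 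A redundant generating set of a free $\Der(\A)$ reduces to this, since $G_p$ depends only on the module $\Der(\A)$ (it is the $S$-span of products of at most $p$ of its elements). What your write-up genuinely leaves open is the non-free case, which the Proposition's wording covers but the paper's Theorem never invokes; there your worry is substantive, because relations among the symbols need not be generated by the linear syzygies (compare ideals that are not of linear type, such as $(x^2,xy,y^2)$), and some additional argument --- for instance symmetrizing modulo $G_{P-1}$ in characteristic zero and differentiating the coefficient relation --- would be needed. Neither the paper's one-line proof nor your sketch supplies that argument, so you should either restrict the statement to the free case you can prove, or complete the descent.
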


\begin{proof}
To prove the first part, it is enough to show that $S$ and~$\Der(\A)$ are
contained in~$\Diff(\A)$, and it is obvious that $S$ is. Let $\delta$ be an
element of~$\Der(\A)$, so that $\delta(Q)=Qf$ for some $f\in S$. If
$t\geq1$, in~$\Diff(S)$ we have that
  \begin{align}
  \delta Q^t 
       &= Q^t\delta + [\delta,Q^t]
        = Q^t\delta + \delta(Q^t)
        = Q^t\delta + tQ^{t-1}\delta(Q) \\
       &= Q^t(\delta+tf)
        \in Q^t\Diff(S),
  \end{align}
so that $\delta\in\II(Q^t\Diff(S))$ and, therefore,
$\delta\in\II(Q^\infty\Diff(S))=\Diff(\A)$.

The second part of the proposition, on the other hand, is a direct
consequence of the commutation relation $\theta_i h=h\theta_i+\theta_i(h)$
that holds in~$\Diff(S)$ for all $i\in\{1,\dots,\ell\}$ and all $h\in S$.
\end{proof}

It follows immediately from the definitions that
$Q\Der(S)\subseteq\Der(\A)$. It is easy to extend that observation to
differential operators of all orders:

\begin{Proposition}\label{prop:transport}
If $u\in F_p\Diff(S)$, then $Q^{\binom{p+1}{2}}u\in\Delta(\A)$.
\end{Proposition}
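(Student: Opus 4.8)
The plan is to argue by induction on~$p$. The base case $p=0$ is immediate: here $F_0\Diff(S)=S\subseteq\Delta(\A)$ and $\binom{1}{2}=0$, so there is nothing to prove. For the inductive step I would first exploit the left $S$-module structure of~$F_p\Diff(S)$. As recalled in the introduction, in characteristic zero the algebra $\Diff(S)$ is generated by $S\cup\Der(S)$; refining this by the order filtration, the products $\theta_1\cdots\theta_q$ of at most $p$ derivations span $F_p\Diff(S)$ as a left $S$-module. Since $\Delta(\A)$ contains~$S$ and is a subalgebra, it is closed under left multiplication by elements of~$S$, so it suffices to prove that $Q^{\binom{p+1}{2}}\theta_1\cdots\theta_q\in\Delta(\A)$ whenever $\theta_1,\dots,\theta_q\in\Der(S)$ with $q\le p$. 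If $q<p$, then $\theta_1\cdots\theta_q$ lies in $F_{p-1}\Diff(S)$, so the inductive hypothesis gives $Q^{\binom{p}{2}}\theta_1\cdots\theta_q\in\Delta(\A)$ and, multiplying by $Q^p\in S$, also $Q^{\binom{p+1}{2}}\theta_1\cdots\theta_q\in\Delta(\A)$; thus everything reduces to a product $\theta_1\cdots\theta_p$ of exactly~$p$ derivations.

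For such a product the key observation is that each $Q\theta_j$ belongs to $Q\Der(S)\subseteq\Der(\A)\subseteq\Delta(\A)$, so the product $(Q\theta_1)\cdots(Q\theta_p)$ itself lies in~$\Delta(\A)$. I would then compare it with $Q^p\theta_1\cdots\theta_p$: moving every factor~$Q$ to the left by repeated use of the relation $\theta_jQ=Q\theta_j+\theta_j(Q)$ in~$\Diff(S)$ (equivalently, comparing principal symbols in~$\gr\Diff(S)$, where both operators have order at most~$p$ and the same symbol $Q^p\sigma(\theta_1)\cdots\sigma(\theta_p)$), one finds that
  \[
  (Q\theta_1)\cdots(Q\theta_p)=Q^p\theta_1\cdots\theta_p+w
  \qquad\text{with } w\in F_{p-1}\Diff(S).
  \]

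Finally I would assemble the pieces using the identity $\binom{p+1}{2}=\binom{p}{2}+p$. Writing $Q^{\binom{p+1}{2}}\theta_1\cdots\theta_p=Q^{\binom{p}{2}}\cdot Q^p\theta_1\cdots\theta_p$ and substituting the displayed relation gives
  \[
  Q^{\binom{p+1}{2}}\theta_1\cdots\theta_p
    = Q^{\binom{p}{2}}(Q\theta_1)\cdots(Q\theta_p)-Q^{\binom{p}{2}}w.
  \]
The first summand lies in~$\Delta(\A)$, being an element of~$S$ times an element of~$\Delta(\A)$; the second lies in~$\Delta(\A)$ by the inductive hypothesis applied to $w\in F_{p-1}\Diff(S)$, since $\binom{p}{2}=\binom{(p-1)+1}{2}$. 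This closes the induction, and the exponent $\binom{p+1}{2}=1+2+\cdots+p$ records that straightening the product costs one extra power of~$Q$ at each order. The only delicate point is the displayed commutation identity: one must verify that the remainder really drops into~$F_{p-1}\Diff(S)$, which is precisely where the leftover power $Q^{\binom{p}{2}}$ is tailored to let the inductive hypothesis absorb it.
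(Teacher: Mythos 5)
Your proof is correct and follows essentially the same route as the paper: both decompose an operator of order~$p$ into $S$-linear combinations of $p$-fold products of derivations modulo $F_{p-1}\Diff(S)$, replace each factor $\theta_j$ by $Q\theta_j\in\Der(\A)$ at the cost of $Q^p$ plus a lower-order remainder (justified, as in the paper, by commutativity of $\gr\Diff(S)$), and absorb that remainder by the inductive hypothesis via $\binom{p+1}{2}=\binom{p}{2}+p$. The only cosmetic difference is that you reduce to individual monomials in arbitrary derivations where the paper works with the fixed dual basis $\partial_1,\dots,\partial_\ell$ and treats the whole sum at once.
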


\begin{proof}
Let $(x_1,\dots,x_\ell)$ be an ordered basis of~$V^*$ and let
$(\partial_1,\dots,\partial_\ell)$ be the corresponding dual ordered basis
of~$\Der(S)$. Let $p\geq0$ and $u\in F_p\Diff(S)$. There are a
$u=(u_{\idx{i}})_{\idx{i}\in W_p}\in S^{W_p}$ and a $v\in F_{p-1}\Diff(S)$
such that 
  \[
  u=\sum_{\idx{i}\in W_p}u_{\idx{i}}\partial_{i_1}\cdots\partial_{i_p}+v.
  \]
As $Q\Der(S)\subseteq\Der(\A)$ and the associated graded algebra
$\gr\Diff(S)$ is commutative, there is a $v'\in F_{p-1}\Diff(S)$ such that
  \[
  Q^pu = \sum_{\idx{i}\in W_p}
        u_{\idx{i}}(Q\partial_{i_1})\cdots(Q\partial_{i_p})
        + v'.
  \]
The sum appearing in the right hand side of this equality is
in~$\Delta(\A)$ and we may assume inductively that
$Q^{\binom{p}{2}}v'\in\Delta(\A)$, so that
  \[
  Q^{\binom{p+1}{2}}u
        = Q^{p+\binom{p}{2}}u
        = Q^{\binom{p}{2}}\sum_{\idx{i}\in W_p}
                u_{\idx{i}}(Q\partial_{i_1})\cdots(Q\partial_{i_p})
           + Q^{\binom{p}{2}}v'
        \in \Delta(\A),
  \]
as we want.
\end{proof}

We are finally in position to prove the main result of this paper:

\begin{Theorem}
Let $\A$ be a free arrangement of hyperplanes in~$V$ and suppose the ground
field~$\kk$ has characteristic zero. The algebra $\Diff(\A)$ is generated
by~$S$ and~$\Der(\A)$.
\end{Theorem}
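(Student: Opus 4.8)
The plan is to prove the nontrivial inclusion $\Diff(\A)\subseteq\Delta(\A)$, the reverse one being already established. First I fix the ordered basis $f=(x_1,\dots,x_\ell)$ of~$V^*$ and, using the freeness of~$\A$, I invoke Saito's criterion (see~\cite{OT}) to obtain a homogeneous basis $\theta_1,\dots,\theta_\ell$ of the free $S$-module $\Der(\A)$ for which
\[
\frac{\partial^1f}{\partial^1\theta}=\det\bigl(\theta_i(x_j)\bigr)=cQ,
\qquad c\in\kk\setminus\{0\}.
\]
I then argue by induction on the order~$p$ of an element $u\in\Diff(\A)$ that $u\in\Delta(\A)$. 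The base case $p=0$ is immediate, since $\Diff(\A)\cap F_0\Diff(S)=S\subseteq\Delta(\A)$.

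For the inductive step I set $v_{\idx{i}}=\theta_{i_1}\cdots\theta_{i_p}$ for each $\idx{i}\in W_p$; these lie in~$\Diff(\A)$, being products of elements of $\Der(\A)\subseteq\Diff(\A)$. Writing $\partial_1,\dots,\partial_\ell$ for the derivations dual to $x_1,\dots,x_\ell$, a short computation identifying $[w,x_{j_1},\dots,x_{j_p}](1)$ with the pairing of $\sigma(w)$ against the degree-$p$ monomials in the $\sigma(\partial_j)$ shows that, for any family $u=(u_{\idx{i}})_{\idx{i}\in W_p}$ of operators of order at most~$p$, one has $\partial^pf/\partial^pu=\gamma_{\ell,p}\det C$, where $C$ is the matrix of coordinates of the symbols $\sigma(u_{\idx{i}})$ in that monomial basis of $\gr_p\Diff(S)$. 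Applying this to $\theta^{(p)}$ and using the identity $\partial^pf/\partial^p\theta^{(p)}=\gamma_{\ell,p}(\partial^1f/\partial^1\theta)^{\binom{p+\ell-1}{\ell}}$ together with the displayed value above gives $\partial^pf/\partial^p\theta^{(p)}=\gamma_{\ell,p}(cQ)^{\binom{p+\ell-1}{\ell}}\neq0$. Hence the coordinate matrix of $\theta^{(p)}$ is invertible over $S[Q^{-1}]$, the symbols $\sigma(v_{\idx{i}})$ form a basis after inverting~$Q$, and I may write $\sigma(u)=\sum_{\idx{i}\in W_p}a_{\idx{i}}\sigma(v_{\idx{i}})$ with coefficients $a_{\idx{i}}\in S[Q^{-1}]$ a priori.

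The heart of the matter is to show that in fact $a_{\idx{i}}\in S$. Here I use that $\partial^pf/\partial^pu$ depends only on the symbols of the components (changing a component by a lower-order operator does not alter the length-$p$ iterated commutators), is $S$-linear in each component, and vanishes when two components have equal symbols, since then two lines of the matrix coincide. Substituting $\sigma(u)=\sum_{\idx{i}}a_{\idx{i}}\sigma(v_{\idx{i}})$ into the $\idx{j}$th slot of $\theta^{(p)}$ and collapsing all repeated-component terms yields
\[
\frac{\partial^pf}{\partial^p\substX{\theta^{(p)}}{u}{\idx{j}}}
 = a_{\idx{j}}\,\frac{\partial^pf}{\partial^p\theta^{(p)}}.
\]
Now $\substX{\theta^{(p)}}{u}{\idx{j}}$ is a family of elements of~$\Diff(\A)$, so Proposition~\ref{prop:jdiffa} puts the numerator in $Q^{\binom{p+\ell-1}{\ell}}S$, while the denominator equals $\gamma_{\ell,p}(cQ)^{\binom{p+\ell-1}{\ell}}$. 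The two powers of~$Q$ match exactly, and in characteristic zero the scalar $\gamma_{\ell,p}\,c^{\binom{p+\ell-1}{\ell}}$ is invertible, whence $a_{\idx{j}}\in S$. Consequently $u-\sum_{\idx{j}\in W_p}a_{\idx{j}}v_{\idx{j}}$ has vanishing class in $\gr_p\Diff(S)$, hence lies in $F_{p-1}\Diff(S)\cap\Diff(\A)$; by the inductive hypothesis it belongs to~$\Delta(\A)$, and since the subtracted sum does too, we conclude $u\in\Delta(\A)$.

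I expect the integrality of the $a_{\idx{j}}$ to be the main obstacle: Cramer's rule alone only delivers them in $S[Q^{-1}]$, and the whole argument hinges on the precise matching between the power of~$Q$ that Proposition~\ref{prop:jdiffa} forces to divide the higher Jacobian of a family of tangent operators and the power of~$Q$ carried by $\partial^pf/\partial^p\theta^{(p)}$ for the Saito basis—reinforced by the nonvanishing of the combinatorial factor $\gamma_{\ell,p}$ in characteristic zero.
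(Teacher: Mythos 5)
Your proof is correct, and while it runs on the same engine as the paper's --- evaluating the higher Jacobian of $\theta^{(p)}$ with one slot substituted in two ways, once through Saito's criterion combined with the identity $\partial^pf/\partial^p\theta^{(p)}=\gamma_{\ell,p}\bigl(\partial^1f/\partial^1\theta\bigr)^{\binom{p+\ell-1}{\ell}}$ and once through Proposition~\ref{prop:jdiffa}, then inducting on the order --- it reaches the decomposition of~$u$ by a genuinely different route. The paper clears denominators \emph{a priori}: Proposition~\ref{prop:transport} gives $Q^{\binom{p+1}{2}}u\in\Delta(\A)$, so that $Q^{\binom{p+1}{2}}u$ can be expanded in the $\theta$-monomials with honest coefficients $u_{\idx{i}}\in S$, and the Jacobian comparison in \eqref{eq:qa} and~\eqref{eq:qb} then shows each $u_{\idx{i}}$ to be divisible by $Q^{\binom{p+1}{2}}$ --- a factor that appears on both sides and cancels. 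You bypass Proposition~\ref{prop:transport} entirely: working at the symbol level, you observe that $\partial^pf/\partial^pu=\gamma_{\ell,p}\det C$ with $C$ the coordinate matrix of the symbols $\sigma(u_{\idx{i}})$ in the monomial basis of $\gr_p\Diff(S)$ (a mild but correct extension of part~\thmitem{2} of the lemma on higher Jacobians, proved by the same permanent computation, since $[\partial_{m_1}\cdots\partial_{m_p},x_{j_1},\dots,x_{j_p}](1)=\idx{j}'!\,\delta_{\idx{m},\idx{j}}$), and you produce the coefficients $a_{\idx{i}}$ by Cramer's rule over $S[Q^{-1}]$, where $\det C_\theta=(cQ)^{\binom{p+\ell-1}{\ell}}$ is a unit; integrality of the $a_{\idx{i}}$ then falls out of the exact matching of the two powers $Q^{\binom{p+\ell-1}{\ell}}$, with the scalar $\gamma_{\ell,p}\,c^{\binom{p+\ell-1}{\ell}}$ invertible in characteristic zero. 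The trade-off: the paper stays integral throughout and needs the Jacobian's $S$-linearity only for coefficients in~$S$, at the price of an extra proposition and the auxiliary exponent $\binom{p+1}{2}$; your version momentarily leaves $S$ for the localization and must justify that the Jacobian is a well-defined, $S$-multilinear, alternating function of the symbols of its entries --- which you do correctly, noting that lower-order perturbations die in the length-$p$ iterated brackets and that equal symbols force equal rows. A pleasant by-product of your argument is that it makes visible that the transport factor $Q^{\binom{p+1}{2}}$ is logically dispensable: in the paper's proof it occurs symmetrically in \eqref{eq:qa} and~\eqref{eq:qb} and cancels, exactly as your localization shortcut predicts. Both proofs conclude identically, by induction on the order of~$u$.
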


\begin{proof}
Let $f=(x_1,\dots,x_\ell)$ be an ordered basis of~$V^*$ and let
$\theta=(\theta_1,\dots,\theta_\ell)$ be an homogeneous basis of the
$S$-module~$\Der(\A)$. Saito's criterion for freeness \cite{OT}*{Theorem
4.19} tells us that
  \[
  \frac{\partial^1f}{\partial^1\theta} = \lambda\, Q
  \]
for some non-zero scalar $\lambda\in\kk$. 

Let $u\in\Diff(\A)$ be a non-zero element which has order at most~$p$. In
view of Proposition~\ref{prop:transport}, we have
$Q^{\binom{p+1}{2}}u\in\Delta(\A)$, and then there exist a family
$(u_{\idx{i}})_{\idx{i}\in W_p}\in S^{W_p}$ of polynomials indexed by~$W_p$
and a $v\in \Delta(\A)\cap F_{p-1}\Diff(S)$ such that
  \begin{equation} \label{eq:q0}
  Q^{\binom{p+1}{2}}u = 
        \sum_{\idx{i}\in W_p} u_{\idx{i}}\theta_{i_1}\cdots\theta_{i_p}
        + v.
  \end{equation}
Let us fix $\idx{k}\in W_p$. We have
  \[
  \frac{\partial^p f}{\partial^p\subst{\theta^{(p)}}{Q^{\binom{p+1}{2}}u}{\idx{k}}}
    = \sum_{\idx{i}\in W_p}
        u_{\idx{i}}
        \frac{\partial^p f}
             {\partial^p\subst{\theta^{(p)}}{\theta_{i_1}\cdots\theta_{i_p}}{\idx{k}}}
        +
        \frac{\partial^p f}{\partial^p\subst{\theta^{(p)}}{v}{\idx{k}}}.
  \]
The second summand in the right hand side of this equality vanishes, because
$v$~has order at most $p-1$, and only possibly non-zero term in the sum is
the one in which $\idx{i}$ coincides with~$\idx{k}$: it follows then that
  \begin{equation} \label{eq:qa}
  \begin{split}
  \frac{\partial^p f}{\partial^p\subst{\theta^{(p)}}{Q^{\binom{p+1}{2}}u}{\idx{k}}}
       &= u_{\idx{k}}\,
         \frac{\partial^p f}{\partial^p\theta^{(p)}}
        = \gamma_{\ell,p}u_{\idx{k}} 
                \left(
                \frac{\partial^1f}{\partial^1\theta}
                \right)^{\binom{p+\ell-1}{\ell}} \\
       &= \gamma_{\ell,p}u_{\idx{k}} (\lambda Q)^{\binom{p+\ell-1}{l}}.
  \end{split}
  \end{equation}
On the other hand, the $S$-linearity of higher Jacobians and
Proposition~\ref{prop:jdiffa} imply that
  \begin{equation} \label{eq:qb}
  \frac{\partial^p f}{\partial^p\subst{\theta^{(p)}}{Q^{\binom{p+1}{2}}u}{\idx{k}}}
    = Q^{\binom{p+1}{2}} \frac{\partial^p f}{\partial^p\subst{\theta^{(p)}}{u}{\idx{k}}}
    \in Q^{\binom{p+1}{2}+\binom{p+\ell-1}{\ell}} S.
  \end{equation}
As our ground field~$\kk$ is of characteristic zero, the scalar
$\gamma_{\ell,p}$ is a unit and comparing the right hand sides of the
equalities~\eqref{eq:qa} and~\eqref{eq:qb} we see that $u_{\idx{k}}$ is
divisible by~$Q^{\binom{p+1}{2}}$. There is then a family $(\bar
u_{\idx{i}})\in S^{W_p}$ such that $u_{\idx{i}}=Q^{\binom{p+1}{2}}\bar
u_{\idx{i}}$ for all $\idx{i}\in W_p$. Going back to~\eqref{eq:q0}, we see
that
  \[
  Q^{\binom{p+1}{2}}\left(
        u -
        \sum_{\idx{i}\in W_p}\bar u_{\idx{i}}\theta_{i_1}\cdots\theta_{i_p}
        \right)
      = v \in F_{p-1}\Diff(S).
  \]
The difference
  \[
  u' = u -
        \sum_{\idx{i}\in W_p}\bar u_{\idx{i}}\theta_{i_1}\cdots\theta_{i_p}
  \]
is therefore an element of~$\Diff(\A)$ of order at most~$p-1$ and,
proceeding by induction, we know that it belongs to~$\Delta(\A)$: this
implies, of course, that so does~$u$. We conclude in this way that
$\Diff(\A)\subseteq\Delta(\A)$, as we wanted.
\end{proof}

\begin{bibdiv}
\begin{biblist}

\bib{CM}{article}{
   author={Calder\'on-Moreno, Francisco Javier},
   title={Logarithmic differential operators and logarithmic de Rham
   complexes relative to a free divisor},
   language={English, with English and French summaries},
   journal={Ann. Sci. \'Ecole Norm. Sup. (4)},
   volume={32},
   date={1999},
   number={5},
   pages={701--714},
   issn={0012-9593},
   review={\MR{1710757}},
   doi={10.1016/S0012-9593(01)80004-5},
}

\bib{Grothendieck}{article}{
   author={Grothendieck, Alexanwer},
   title={\'El\'ements de g\'eom\'etrie alg\'ebrique. IV. \'Etude locale
   des sch\'emas et des morphismes de sch\'emas IV},
   journal={Inst. Hautes \'Etudes Sci. Publ. Math.},
   number={32},
   date={1967},
   pages={361},
   issn={0073-8301},
   review={\MR{0238860}},
   eprint={http://www.numdam.org/item?id=PMIHES_1967__32__361_0},
}

\bib{MR}{book}{
   author={McConnell, John Coulter},
   author={Robson, J. Chris},
   title={Noncommutative Noetherian rings},
   series={Graduate Studies in Mathematics},
   volume={30},
   edition={Revised edition},
   publisher={American Mathematical Society, Providence, RI},
   date={2001},
   pages={xx+636},
   isbn={0-8218-2169-5},
   review={\MR{1811901}},
}

\bib{Nakai}{article}{
   author={Nakai, Yoshikazu},
   title={On the theory of differentials in commutative rings},
   journal={J. Math. Soc. Japan},
   volume={13},
   date={1961},
   pages={63--84},
   issn={0025-5645},
   review={\MR{0125131}},
   doi={10.2969/jmsj/01310063},
   eprint={https://doi.org/10.2969/jmsj/01310063},
}

\bib{OT}{book}{
   author={Orlik, Peter},
   author={Terao, Hiroaki},
   title={Arrangements of hyperplanes},
   series={Grundlehren der Mathematischen Wissenschaften},
   volume={300},
   publisher={Springer-Verlag, Berlin},
   date={1992},
   pages={xviii+325},
   isbn={3-540-55259-6},
   review={\MR{1217488}},
   doi={10.1007/978-3-662-02772-1},
}

\bib{Saito}{article}{
   author={Saito, Kyoji},
   title={Theory of logarithmic differential forms and logarithmic vector
   fields},
   journal={J. Fac. Sci. Univ. Tokyo Sect. IA Math.},
   volume={27},
   date={1980},
   number={2},
   pages={265--291},
   issn={0040-8980},
   review={\MR{586450}},
}

\bib{Schulze}{article}{
   author={Schulze, Mathias},
   title={A criterion for the logarithmic differential operators to be
   generated by vector fields},
   journal={Proc. Amer. Math. Soc.},
   volume={135},
   date={2007},
   number={11},
   pages={3631--3640},
   issn={0002-9939},
   review={\MR{2336579}},
   doi={10.1090/S0002-9939-07-08969-1},
}

\bib{Terao}{article}{
   author={Terao, Hiroaki},
   title={Free arrangements of hyperplanes and unitary reflection groups},
   journal={Proc. Japan Acad. Ser. A Math. Sci.},
   volume={56},
   date={1980},
   number={8},
   pages={389--392},
   issn={0386-2194},
   review={\MR{596011}},
   doi={10.1007/BF01389197},
   eprint={https://doi.org/10.1007/BF01389197},
}

\end{biblist}
\end{bibdiv}

\end{document}